 \newtheorem{thm}{Theorem}[section]
 \theoremstyle{definition}
 \theoremstyle{remark}
 \newtheorem{ex}{Example}
 \numberwithin{equation}{section}
\newcommand{\cred}[1]{{\color{black}  #1}}
\newcommand{\cblu}[1]{{\color{black}  #1}}
\begin{document}

%
%
%
%
%
%
%
%
%

\title[A new block diagonal preconditioner]{A new block diagonal preconditioner for a class of $3\times 3$ block saddle point problems}

\author[M. Abdolmaleki]{Maryam  Abdolmaleki}

\address{Faculty of Intelligent Systems Engineering and Data Science, Persian Gulf University, Bushehr, Iran}

\email{maleki.un@gmail.com}

\author[S. Karimi]{Saeed Karimi$^*$}
\address{Faculty of Intelligent Systems Engineering and Data Science, Persian Gulf University, Bushehr, Iran}
\email{karimi@pgu.ac.ir}
\author[D. K. Salkuyeh]{Davod Khojasteh Salkuyeh}
\address{Faculty of Mathematical Sciences, University of Guilan, Rasht, Iran\\
Center of Excellence for Mathematical Modelling, Optimization and Combinational Computing
(MMOCC), University of Guilan, Rasht, Iran}
\email{khojasteh@guilan.ac.ir}
\subjclass{ 65F10, 65F50, 65F08.}

\keywords{\cred{$3\times 3$ block saddle point, GMRES, block diagonal, preconditioner, eigenvalue}}

\date{January 1, 2020}

\begin{abstract}
We study the performance of a new block  preconditioner for a class of $3\times3$ block saddle point problems which arise from finite element methods for solving time-dependent Maxwell equations and some other practical problems. We also estimate the lower and upper bounds of eigenvalues of the preconditioned matrix. \cred{Finally, we examine our new preconditioner to accelerate the convergence speed of the GMRES method which shows the effectiveness of the preconditioner.}
\end{abstract}

\maketitle
 \section{Introduction}\label{Sec1}
Consider the following $3\times 3$ block saddle point problems:
\begin{equation} \label{Sdpr1}
    \mathcal{B}u:\equiv
    \left(
    \begin{array}{ccc}
        A & B^T & 0 \\
        B & 0 & C^T\\
        0 & C & 0\\
    \end{array}	
    \right)
    \left(
    \begin{array}{c}
        x\\
        y\\
        z\\
    \end{array}	
    \right)
    =
    \left(
    \begin{array}{c}
        f\\
        g\\
        h\\
    \end{array}	
    \right),
\end{equation}
\noindent
where  $A\in \mathbb{R}^{n\times n}$ is  a \cred{symmetric positive definite (SPD)} matrix, and  $B\in \mathbb{R}^{m\times n}$ and  $C\in \mathbb{R}^{l\times m}$ have full row rank. In addition, $f\in \mathbb{R}^{n}$, $g\in \mathbb{R}^{m}$ and $h\in \mathbb{R}^{l}$ are given vectors, and  $u$ is an unknown vector which is to be determined. Furthermore, we presume that the matrices $A$ and $B$ are large and sparse. It is not difficult to check that under the above conditions the coefficient matrix $\mathcal{B}$ is nonsingular and as a result system \eqref{Sdpr1} has a unique solution \cite{last}. In this work, we focus on preconditioned Krylov-subspace methods, especially, \cred{the preconditioned  GMRES method} (see \cite{fgmres1,fgmres2}). 	

Linear systems of the form \eqref{Sdpr1} appear in a variety of scientific and engineering problems, for instance, full discrete finite element methods for solving the time-dependent Maxwell equations with discontinuous coefficients  \cite{Assous,Chen,Ciarlet}, the following quadratic program \cite{Han}:
\[
\min\{\frac{1}{2} x^T Ax+r^T x+q^T y\},\quad
\]
\[
s.t.\quad  Bx+c^T y=b,\quad x\in \mathbb{R}^n,\quad y\in \mathbb{R}^l,
\]
where $r\in \mathbb{R}^n$ and  $q\in \mathbb{R}^l$
are given vectors, Picard iteration schemes for variational formulation of the stationary incompressible magnetohydrodynamics system \cite{Hu} and least squares problems \cite{LS}.

Linear systems of $2\times2$ block form,

\begin{equation} \label{two by two}
    \left(
    \begin{array}{cc}
        \tilde{A} & {\tilde{B}}^T \\
        \tilde{B} & -\tilde{C} \\
    \end{array}	
    \right)
    \left(
    \begin{array}{c}
        x\\
        y\\
    \end{array}	
    \right)
    =
    \left(
    \begin{array}{c}
        f\\
        g\\
    \end{array}	
    \right),
\end{equation}
known as traditional saddle point problems, have  been extensively studied for decades, where  $\tilde{A}$ and $\tilde{C}$ are positive and  positive semi-definite matrices respectively, and $\tilde{B
}$ is  a full row rank matrix. Constructing preconditioners to improve the convergence speed of Krylov-subspace methods for solving \eqref{two by two} has produced a considerable amount of literature, e.g., shift-splitting preconditioners \cite{shift3,shift1,shift2,shift4}, block triangular preconditioners \cite{bt1,bt3,bt2,our}, inexact constraint preconditioners\cite{cp2,cp1,cp3} and so on. It is obvious that the $3\times 3$ block linear system \eqref{Sdpr1} can be seen as a special case of the  traditional $2\times 2$ form \eqref{two by two} using the following partitioning strategies,
\begin{equation}\label{part1}
    \begin{pmatrix}
        A & B^T & \vdots & 0\\
        B & 0 & \vdots & C^T \\
        \cdots & \cdots & \vdots & \cdots\\
        0 & C & \vdots & 0
    \end{pmatrix},
    \qquad
\end{equation}
or
\begin{equation}\label{part2}
    \begin{pmatrix}
        A & \vdots & B^T & 0\\
        \cdots & \cdots &\cdots & \cdots \\
        B & \vdots & 0 & C^T \\
        0 & \vdots & C & 0
    \end{pmatrix}.
\end{equation}
However, \cred{most of the methods for the latter matrices cannot be directly applied to \eqref{Sdpr1}}. This is because the properties of the sub-matrix in \eqref{part1} and \eqref{part2} are different from the traditional form \eqref{two by two}. Indeed, the $(1,1)$ leading block matrix in \eqref{part1}  is symmetric indefinite and a standard saddle point matrix. Thus, the linear system with the coefficient matrix \eqref{part1} can be considered as a double saddle point problem studied recently \cite{Double1,Double2}. In addition, the $(2,2)$ block in \eqref{part2} is a symmetric matrix and $(1,2)$ block is rank deficient. Therefore, it is extremely  interesting to find new efficient preconditioners for the $3\times 3$ block saddle point problems \eqref{Sdpr1}.

Some preconditioners have been studied to accelerate convergence rate of the Krylov-subspace methods for solving this class of $3\times 3$ systems,  for example \cite{Ex1-2,3-2,3-1}. Recently, Huang and Ma \cite{Ex1-1} proposed two block diagonal (BD) preconditioners,
\begin{equation}\label{BD2}
    \mathcal{P}_{BD1}=\begin{pmatrix}
        A & 0 & 0\\
        0 & S & 0 \\
        0 & 0 & CS^{-1} C^T
    \end{pmatrix}
    \qquad \text{and} \qquad \mathcal{P}_{BD2}=
    \begin{pmatrix}
        \hat{A} & 0 & 0\\
        0 & \hat{S} & 0 \\
        0 & 0 & CS^{-1} C^T
    \end{pmatrix},
\end{equation}
for solving \eqref{Sdpr1} in which $S=BA^{-1} B^T$, $\hat{A}$ and $\hat{S}$ are SPD approximations of $A$ and $S$, respectively. They also derive all the eigenpairs of preconditioned matrix. Subsequently, Xie and Li \cite{last} proposed three new preconditioners for solving the linear system \eqref{Sdpr1} that can be seen as follows
\begin{equation}
\begin{aligned}\label{p2}
    \mathcal{P}_{1}=\begin{pmatrix}
        A & 0 & 0\\
        B & -S & C^T \\
        0 & 0 & -CS^{-1}C^T
    \end{pmatrix},\quad &
    \mathcal{P}_{2}=\begin{pmatrix}
        A & 0 & 0\\
        B & -S & C^T \\
        0 & 0 & CS^{-1}C^T
    \end{pmatrix}, \\   
    \mathcal{P}_{3}=\begin{pmatrix}
        A & B^T & 0\\
        B & -S & C^T \\
        0 & 0 & -CS^{-1}C^T
    \end{pmatrix},\quad
\end{aligned}
\end{equation}
where $S=BA^{-1}B^T$. They analyzed spectral properties of corresponding preconditioned matrices and showed that the proposed preconditioners significantly accelerate the convergence rate of GMRES method. \cred{ However, when the inner systems are solved inexactly, the elapsed CPU time is increased drastically and often give unacceptable solutions.}

Here, we consider the following equivalent form of \eqref{Sdpr1}:

\begin{equation}\label{equ}
    {\mathcal{A}}u:\equiv
    \left(
    \begin{array}{ccc}
        A & B^T & 0 \\
        -B & 0 & -C^T\\
        0 & C & 0\\
    \end{array}	
    \right)
    \left(
    \begin{array}{c}
        x\\
        y\\
        z\\
    \end{array}	
    \right)
    =
    \left(
    \begin{array}{c}
        f\\
        -g\\
        h\\
    \end{array}	
    \right)
    \equiv b.
\end{equation}
Although the coefficient matrix of the system \eqref{equ} is not symmetric, it has some desirable properties. For instance, the matrix $\mathcal{A}$ is positive semi-definite, i.e.,  $\mathcal{A}+{\mathcal{A}}^T$ is symmetric positive semi-definite. This is a signification for the GMRES method. In fact, the restarted version of GMRES($m$) converges for all $m\geq1$. 
\cred{It is noticeable that setting up the exact preconditioners mentioned above are very time-consuming and the inexact preconditioners need approximations of $A$ and $S$.} 
In this paper, we establish a new block preconditioner  for solving the linear system \eqref{equ} which is easy to implement 
and has much better computing efficiently than the preconditioners studied recently.

It  is noteworthy that, few analytical results on spectral bounds are  available for a $3\times 3$ block matrix of the form  $\mathcal{A}$. In fact, contrary to the case for $2\times 2$ block matrix, we get a cubic equation from the eigen-system of a $3\times 3$ block matrix. Therefore, estimating  the bounds of eigenvalues brings some difficulties.
Consider the following  monic polynomial of degree $n\geq2$:
\begin{equation}
    p(z)=z^n+\sum_{k=0}^{n-1} a_k z^k, \qquad a_i\in \mathbb{C}, \quad i=0,1,\ldots,n-1.
\end{equation}
\cred{Finding approximately the roots of $p(z)$ through simple operations with its coefficients has led to publish a plenty of studies, for example, see the comprehensive surveys \cite{Marden,Sendov} and references therein for further details}. We will derive the spectral bounds of corresponding preconditioned system by using one of the classical and sharp bounds that has been used to obtain simple lower and upper bounds on the absolute value of the roots of $p(z)$.

This paper is divided into three sections, schemed as follows. In Section \ref{sec2}, the new preconditioner is presented and the clustering properties of corresponding preconditioned system are discussed. Numerical results are given in Section \ref{sec3} to demonstrate the effectiveness of the new preconditioner. The paper is ended by some concluding remarks in Section \ref{sec4}.

We end this section with an introduction of some notation that will be used in the subsequent sections. The symbol $x^*$ is used for the conjugate transpose of the vector $x$. For any square matrix $A$ with real eigenvalues, the minimum and maximum eigenvalues of $A$ are indicated by $\lambda_{\min}(A)$ and $\lambda_{\max}(A)$, respectively. The norm $\|.\|$ indicates the \cred{Euclidean norm}. \cred{Moreover, we use \textsc{Matlab} notation  $(x;y;z)$ to denote the vector $(x^T,y^T,z^T)^T$}.

\section{The new block diagonal preconditioner} \label{sec2}
We propose the following block preconditioner for solving the linear system \eqref{equ}
\begin{equation}\label{prec}
    \mathcal{M}=\begin{pmatrix}
        A & 0 & 0 \\
        0 & \alpha I+\beta BB^T & 0 \\
        0 & 0 & \alpha I+\beta CC^T
    \end{pmatrix},
\end{equation}
\noindent
where $\alpha, \beta >0$. \cred{The main advantage of the preconditioner $\mathcal{M}$ over the preconditioners mentioned in Section \ref{Sec1} is that it is free of the Schur complement matrix $S=BA^{-1} B^T$ and easy to implement.}  Concerning the clustering properties of the eigenvalues of the preconditioned matrix $\mathcal{M}^{-1}\mathcal{A}$, we have the following theorems.

\begin{thm}\label{bound}
    \cite{Bound} Let $p(z)=z^n+\sum_{k=0}^{n-1} a_k z^k$ be a monic polynomials with complex coefficients and $\lambda$ be any root of $p(z)$. Then $|\lambda|$ satisfies the following inequalities
    \begin{itemize}
        \item Cauchy's lower and upper bounds
        \small{
       \begin{align*}
        \frac{\left|a_0\right| }{\max\lbrace 1,\left| a_0 \right|+\left| a_1 \right|,\left| a_0 \right|+\left| a_2 \right|,\ldots,\left| a_0 \right|+\left| a_{n-1} \right|\rbrace  } & \\
        \leq \left|\lambda \right| \leq& \max \lbrace \left| a_0 \right|,1+\left| a_1\right|,\ldots,1+\left| a_{n-1} \right| \rbrace.
         \end{align*}}
        \item Montel's lower and upper bounds
        \[
        \frac{\left|a_0\right|}{\max\lbrace \left| a_0 \right|, 1+\left| a_1 \right|+\left| a_2 \right|+\cdots+\left|a_{n-1}  \right|\rbrace} \leq \left|\lambda \right|  \leq \max \lbrace 1, \left| a_0\right|+\left| a_1\right|+\cdots +\left| a_{n-1} \right| \rbrace.
        \]
        \item Carmichael-Mason's lower and upper bounds
        \[
        \frac{\left|a_0\right|}{\sqrt{ 1+\left| a_0 \right| ^2 +\left| a_1 \right| ^2 +\cdots+\left|a_{n-1}  \right|^2 }} \leq \left|\lambda \right|   \leq \sqrt{ 1+\left| a_0 \right| ^2 +\left| a_1 \right| ^2 +\cdots+\left|a_{n-1}  \right|^2 }.
        \]
        \item Frobenius' lower and upper bounds
        \[
        \frac{\left|a_0\right|}{\sqrt{ 1+(n-1)\left| a_0 \right| ^2 +\left| a_1 \right| ^2 +\cdots+\left|a_{n-1}  \right|^2 }} \leq \left|\lambda \right|   \leq \sqrt{ (n-1)+\left| a_0 \right| ^2 +\cdots+\left|a_{n-1}  \right|^2 }.
        \]

    \end{itemize}

\end{thm}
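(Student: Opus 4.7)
The statement is a compilation of classical root-location bounds attributed to Cauchy, Montel, Carmichael--Mason, and Frobenius, cited here from \cite{Bound}. The overall plan is to establish each upper bound by direct manipulation of the equation $p(\lambda)=0$, and then obtain the matching lower bound by applying the already-proved upper bound to the reciprocal polynomial $q(z):=z^n p(1/z)/a_0$ whose roots are the reciprocals of the roots of $p$ (this requires $a_0\neq 0$; otherwise $0$ is a root and the lower bound is trivial).

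For the upper bounds I would start from $\lambda^n=-\sum_{k=0}^{n-1}a_k\lambda^k$ and take moduli. The four bounds then correspond to four different choices of inequality. The Cauchy and Montel bounds use the triangle inequality combined with the dichotomy $|\lambda|\leq 1$ versus $|\lambda|>1$; in the latter case one divides by a suitable power of $|\lambda|$ to make the geometric tails collapse, and the $\max\{\cdot\}$ in the statement is precisely the device that glues the two cases into a single bound. The Carmichael--Mason bound comes from Cauchy--Schwarz applied to the vectors $(a_0,\ldots,a_{n-1})$ and $(1,\lambda,\ldots,\lambda^{n-1})$, together with summing the geometric series $\sum_{k=0}^{n-1}|\lambda|^{2k}=(|\lambda|^{2n}-1)/(|\lambda|^2-1)$; a short algebraic rearrangement then yields $|\lambda|^2\leq 1+\sum|a_k|^2$. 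The Frobenius bound is most cleanly obtained by considering the companion matrix $C_p$ of $p$, whose eigenvalues are exactly the roots of $p$, and using the fact that every eigenvalue is bounded in modulus by any matrix norm, in particular by $\|C_p\|_F=\sqrt{(n-1)+\sum|a_k|^2}$ where the extra $(n-1)$ comes from the ones on the subdiagonal.

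For the lower bounds the reciprocal trick does all the work: if $\mu=1/\lambda$ is a root of the monic polynomial $\tilde q(z):=q(z)/q(0)$, then each upper bound, applied to $\tilde q$, translates immediately into a lower bound for $|\lambda|=1/|\mu|$. I expect the main obstacle to be purely bookkeeping rather than conceptual: one must keep track of how the coefficients rearrange under $p\mapsto\tilde q$ (the roles of $a_k$ permute and everything is normalized by $|a_0|$, which explains the common factor of $|a_0|$ in the numerators of all four lower bounds) and, in the Frobenius and Carmichael--Mason cases, verify that the $\ell^2$ norm of the reciprocated coefficient vector produces exactly the denominator in the stated bound. Once these book\-keeping steps are carried out correctly, the four pairs of inequalities drop out uniformly.
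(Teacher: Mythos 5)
The paper does not prove this theorem at all: it is quoted verbatim from Horn and Johnson \cite{Bound} as a known classical result, so there is no in-paper argument to compare against. Your sketch is a correct outline of the standard proof, and I have checked that the bookkeeping you defer does in fact close: applying each upper bound to the monic reciprocal polynomial with coefficients $b_0=1/a_0$, $b_j=a_{n-j}/a_0$ reproduces exactly the four stated lower bounds (in particular the $(n-1)|a_0|^2$ term in the Frobenius denominator comes out right), and the dichotomy $|\lambda|\le 1$ versus $|\lambda|>1$ together with the geometric-series collapse does yield the Cauchy bound in the precise form $\max\{|a_0|,1+|a_1|,\dots,1+|a_{n-1}|\}$, not just the cruder $1+\max_k|a_k|$. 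Two small remarks. First, your $\tilde q(z):=q(z)/q(0)$ is redundant: $q(z)=z^np(1/z)/a_0$ is already monic, and dividing by $q(0)=1/a_0$ would destroy monicity; you want only one normalization. Second, the source's own derivation is more uniform than yours: all four upper bounds are instances of $|\lambda|\le\rho(C)\le\|C\|$ for the companion matrix $C$ of $p$, with the maximum column-sum norm giving Cauchy, the maximum row-sum norm giving Montel, the Frobenius norm giving Frobenius, and a spectral-norm estimate giving Carmichael--Mason; you invoke the companion matrix only for the Frobenius bound and prove the rest by direct polynomial manipulation, which is equally valid but less economical. Either route is acceptable for a result the paper merely cites.
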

It is noted that, Cauchy's bounds are essentially the most sharpest ones in Theorem \ref{bound} and have been used for testing the sharpness of other new bounds. Hence, we will utilize these bounds to locate the  eigenvalues of corresponding preconditioned matrix.

\begin{thm}\label{th2}
    Suppose that $A\in\mathbb{R}^{n\times n}$ is SPD, and $B\in\mathbb{R}^{m\times n}$ and $C\in\mathbb{R}^{l\times m}$ have full row rank. Then the preconditioned matrix $\mathcal{M}^{-1}\mathcal{A}$ has an eigenvalue $1$ of \cred{algebraic multiplicity} $n-m$, i.e. $\lambda_i ^{(1)}=1$, $i=1,2,\ldots,n-m$, and the corresponding eigenvectors are of the form $(x_i;0;0),$ where $\{x_i\}_{i=1} ^{n-m}$  is a basis for the null space of $B$. Moreover, for the remaining eigenvalues we have the following statements:
    \begin{itemize}
        \item If $p+q>1$, $ \qquad \frac{p}{1+2p+q} \leq \left| \lambda - 1 \right| <  2+p+q$,
        \item If $p+q\leq 1$, $\qquad \quad \frac{p}{2+p}\leq \left| \lambda - 1 \right| \leq  3$,
    \end{itemize}
    where
    \[
    p=\frac{y^* BA^{-1}B^Ty}{y^*(\alpha I+\beta BB^T)y} \qquad \text{and} \qquad
    q=\frac{y^* C^T(\alpha I+\beta CC^T)^{-1}Cy}{y^* (\alpha I+\beta BB^T)y},
    \]
    for some $0\neq y\in\mathbb{R}^m$ and $\alpha,\beta>0$.
\end{thm}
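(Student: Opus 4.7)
The plan is to analyze the generalized eigenvalue problem $\mathcal{A}u=\lambda\mathcal{M}u$ for $u=(x;y;z)\neq 0$, which decomposes into the three block equations
\begin{equation*}
(1-\lambda)Ax+B^T y=0,\qquad -Bx-C^T z=\lambda(\alpha I+\beta BB^T)y,\qquad Cy=\lambda(\alpha I+\beta CC^T)z.
\end{equation*}
First I would handle the degenerate branch $y=0$. Since $\mathcal{A}$ is nonsingular (it differs from $\mathcal{B}$ only by a sign change of the second block row), $\lambda\neq 0$, so the third equation forces $z=0$; the second then gives $Bx=0$ and the first gives $(1-\lambda)Ax=0$. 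Because $A$ is SPD and the eigenvector is nonzero, this pins down $\lambda=1$ with eigenvectors $(x;0;0)$ for $x\in\ker B$; the full row rank of $B$ then gives $\dim\ker B=n-m$, producing the asserted multiplicity.

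For $y\neq 0$ I would first observe that $\lambda\neq 1$, since otherwise the first equation would reduce to $B^T y=0$, contradicting the full column rank of $B^T$. I can therefore invert the first and third equations to write
\begin{equation*}
x=\frac{1}{\lambda-1}A^{-1}B^T y,\qquad z=\frac{1}{\lambda}(\alpha I+\beta CC^T)^{-1}Cy,
\end{equation*}
and substitute into the middle equation. Multiplying on the left by $y^*$ and dividing through by the strictly positive scalar $R:=y^*(\alpha I+\beta BB^T)y$ yields the scalar identity
\begin{equation*}
-\frac{p}{\lambda-1}-\frac{q}{\lambda}=\lambda.
\end{equation*}
Clearing denominators produces the cubic $\lambda^3-\lambda^2+(p+q)\lambda-q=0$, and the shift $\mu:=\lambda-1$ recasts it as the monic cubic
\begin{equation*}
\mu^3+2\mu^2+(1+p+q)\mu+p=0.
\end{equation*}

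With this cubic in hand I would invoke Cauchy's lower and upper bounds from Theorem~\ref{bound} applied with $a_0=p$, $a_1=1+p+q$, $a_2=2$. The upper bound reduces to $\max\{p,\,2+p+q,\,3\}=\max\{2+p+q,\,3\}$, and the denominator of the lower bound reduces to $\max\{1,\,1+2p+q,\,2+p\}$. A case split on the sign of $(p+q)-1$, using the elementary identity that $1+2p+q>2+p$ exactly when $p+q>1$, picks out the attaining term in each maximum and reproduces the two pairs of bounds in the statement. The principal obstacle is the bookkeeping for this case split and simplification of the maxima; by contrast, the derivation of the cubic by eliminating $x$ and $z$ and testing against $y^*$ is essentially routine substitution, and the verification that the denominators $\lambda$ and $\lambda-1$ are nonzero follows from the nonsingularity of $\mathcal{A}$ and the rank argument above.
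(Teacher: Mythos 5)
Your proposal is correct and follows essentially the same route as the paper's own proof: eliminating $x$ and $z$ to obtain the cubic $\mu^3+2\mu^2+(1+p+q)\mu+p=0$ with $\mu=\lambda-1$, then applying Cauchy's bounds with the case split governed by the comparison $1+2p+q \gtrless 2+p$, i.e.\ $p+q\gtrless 1$. The only cosmetic difference is that the paper first rules out $x=0$ as a separate step, whereas you absorb that observation into the $y=0$ branch; both are fine.
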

\begin{proof}
    Let $(\lambda,(x;y;z))$ be an eigenpair of $\mathcal{M}^{-1}\mathcal{A}$. Then, we have
    \small{
    \[
    \left(
    \begin{array}{ccc}
        A & B^T & 0 \\
        -B & 0 & -C^T\\
        0 & C & 0\\
    \end{array}	
    \right)
    \left(
    \begin{array}{c}
        x\\
        y\\
        z\\
    \end{array}	
    \right)
    =
    \lambda
    \left(
    \begin{array}{ccc}
        A & 0 & 0 \\
        0 & \alpha I+\beta BB^T& 0\\
        0 & 0 & \alpha I+\beta CC^T\\
    \end{array}	
    \right)
    \left(
    \begin{array}{c}
        x\\
        y\\
        z\\
    \end{array}	
    \right),
    \]}
    which can be rewritten as

    \begin{subequations}
        \begin{empheq}[left=\empheqlbrace]{align}
            &Ax+B^Ty=\lambda Ax, \label{a} \\
            &Bx+C^Tz=-\lambda (\alpha I+\beta BB^T)y, \label{b} \\
            &Cy=\lambda (\alpha I+\beta CC^T)z. \label{c}
        \end{empheq}
    \end{subequations}
    \noindent
    If $x=0 $, then from \eqref{a} we obtain $B^Ty=0$, which shows that $y=0$. This along with \eqref{c} yield that $z=0$, which is a contradiction with the fact that $(x;y;z)$ is an eigenvector. Next, we complete the proof in the following cases.

    Firstly, we consider the case that $y=0$. Since $\lambda \neq 0$ and $\alpha I+\beta CC^T$ is nonsingular, we deduce that $z=0$. Then \eqref{a} and \eqref{b} are reduced to
    \[
    Ax=\lambda Ax, \qquad Bx=0,
    \]
    respectively. This shows that $\lambda=1$ and  the corresponding eigenvectors are of the form $(x;0;0)$ with $x\in null(B).$

    Next, we consider the case that $y\neq 0$. Then from \eqref{a}, we know that $\lambda \neq 1$. Noticing that both of matrices $\mathcal{A}$ and   $\alpha I+\beta CC^T$  are nonsingular, it follows from \eqref{a} and \eqref{c} that
    \[
    x=\frac{1}{\lambda-1}A^{-1}B^Ty, \qquad z=\frac{1}{\lambda}( \alpha I+\beta CC^T)^{-1}Cy.
    \]
    Substituting the preceding equalities into \eqref{b}, gives 
    \[
    \frac{1}{\lambda -1}BA^{-1}B^Ty+\frac{1}{\lambda}C^T
    (\alpha I+\beta CC^T)^{-1}Cy=-\lambda (\alpha I+\beta BB^T)y.
    \]
    By premultiplying the above equation by $\lambda (\lambda -1)$ and $y^*$, we obtain the following cubic equation:
    \begin{equation} \label{cubic}
        \lambda^3 -\lambda^2 +(p+q)\lambda -q=0,
    \end{equation}
    where
    \begin{equation}\label{p&q}
        p=\frac{y^* BA^{-1}B^Ty}{y^*(\alpha I+\beta BB^T)y} \qquad \text{and} \qquad
        q=\frac{y^* C^T(\alpha I+\beta CC^T)^{-1}Cy}{y^* (\alpha I+\beta BB^T)y}.
    \end{equation}
    To obtain the clustering properties of the eigenvalues around the point $(1,0)$ we set $\mu=\lambda -1$. By substituting $\mu$ in \eqref{cubic} we get the following cubic equation
    \begin{equation}\label{cubic2}
        \mu ^3 + 2\mu ^2 +(1+p+q)\mu +p=0,
    \end{equation}
    and find upper and lower bounds for $\mu$. From  Theorem \ref{bound}, we deduce that the Cauchy's lower and upper bounds for $\mu$ are
    \begin{subequations}
        \begin{empheq}[left=\empheqlbrace]{align}
            \frac{p}{1+2p+q} \leq \left| \mu \right| < & 2+p+q, & \text{if} \quad p+q>1, \label{CAu1}\\
            \frac{p}{2+p}\leq \left| \mu \right| \leq & 3,  & \text{if} \quad p+q\leq 1,\label{CAU2}
        \end{empheq}
    \end{subequations}
which completes the proof. 
\end{proof}

As we see in Theorem \ref{th2}, the lower and upper bounds of some eigenvalues of the preconditioned matrix $\mathcal{M}^{-1}\mathcal{A}$  depend on the parameters $\alpha$ and $\beta$. Using \eqref{p&q} and  the Schur decomposition of matrices $A$ and $\alpha I+\beta CC^T$, and straightforward computations the following upper and lower bounds for $p$ and $q$ are  obtained as
\[
\frac{\lambda_{\min} (A^{-1}) \lambda_{\min} (BB^T)}{\alpha +\beta\lambda_{\max} (BB^T)}\leq p \leq \frac{\lambda_{\max} (A^{-1}) \lambda_{\max} (BB^T)}{\alpha +\beta\lambda_{\min} (BB^T)},
\]
and
\[
\frac{{\lambda_{\min} (C^TC)} (\alpha+\beta \lambda_{\max} (CC^T))^{-1}}{\alpha +\beta\lambda_{\max} (BB^T)}\leq q \leq \frac{\lambda_{\max} (CC^T) (\alpha +\beta \lambda_{\min} (CC^T))^{-1}}{\alpha +\beta \lambda_{\min} (BB^T)}.
\]
\noindent
Hence, we conclude that
\begin{multline}\label{lam}
    \frac{\lambda_{\min} (A^{-1}) \lambda_{\min} (BB^T) +{\lambda_{\min} (C^TC)}(\alpha +\beta\lambda_{\max} (CC^T))^{-1}}{\alpha +\beta\lambda_{\max}(BB^T)}
    \\ \leq p+q \leq
    \frac{\lambda_{\max} (A^{-1}) \lambda_{\max} (BB^T) +\lambda_{\max} (CC^T)(\alpha +\beta\lambda_{\min} (CC^T))^{-1}}{\alpha +\beta\lambda_{\min}(BB^T)}.
\end{multline}
\cred{Now, we consider the following two cases:}

\noindent
\textbf{case I:} \quad $p+q>1$: \quad From \eqref{lam} we deduce that all we need is to seek the parameters $\alpha$ and $\beta$ such that
\[
1<\frac{\lambda_{\min} (A^{-1}) \lambda_{\min} (BB^T) +{\lambda_{\min} (C^TC)}(\alpha +\beta\lambda_{\max} (CC^T))^{-1}}{\alpha +\beta\lambda_{\max}(BB^T)}.
\]
For the sake of the simplicity, let
\[
\eta=\lambda_{\max}(BB^T) + \lambda_{\max}(CC^T), \qquad \theta =\lambda_{\max}(BB^T) \lambda_{\max}(CC^T),
\]
\[
\kappa=\lambda_{\min} (A^{-1}) \lambda_{\min} (BB^T),\qquad \gamma=\lambda_{\min} (A^{-1}) \lambda_{\min} (BB^T) \lambda_{\max} (CC^T),
\]
\[
{\zeta = \lambda_{\min}(C^TC)} .
\]
Therefore, we have the following inequality
\[
\alpha^2 + (\eta \beta -\kappa)\alpha+\theta \beta^2 -\gamma \beta - \zeta <0.
\]
It is clear that matrices $CC^T$ and $BB^T$ are both SPD under our assumptions. Hence, we infer that $\eta \neq 0$. Therefore, we can set
\[
\beta = \frac{\kappa}{\eta}\equiv \frac{\lambda_{\min} (A^{-1}) \lambda_{\min} (BB^T)}{\lambda_{\max} (BB^T)+ \lambda_{\max} (CC^T)}
\]
and derive that
\[
0<\alpha<\sqrt{\frac{\gamma \kappa}{\eta}-\frac{\theta \kappa ^2}{\eta ^2}+\zeta}.
\]
Note that $\theta\kappa<\eta\gamma$, and then it follows that $\frac{\kappa}{\eta}(\gamma-\frac{\theta\kappa}{\eta})>0$. Therefore, the above inequality can be easily deduced.

\noindent
\textbf{case II:} \quad $p+q\leq1$: \quad In this case, it is enough to find $\alpha$ and $\beta$ such that
\[
\frac{\lambda_{\max} (A^{-1}) \lambda_{\max} (BB^T) +\lambda_{\max} (CC^T)(\alpha +\beta\lambda_{\min} (CC^T))^{-1}}{\alpha +\beta\lambda_{\min}(BB^T)} \leq 1.
\]
Hence, we have
\[
\alpha ^2 +(\eta ^\prime \beta -\kappa ^ \prime)\alpha+\beta ^2 \theta ^\prime -\beta \gamma ^\prime -\zeta^ \prime \geq 0,
\]
where
\[
\eta ^\prime=\lambda_{\min} (BB^T)+\lambda_{\min} (CC^T), \quad \theta^\prime=\lambda_{\min} (CC^T)\lambda_{\min} (BB^T),
\]
\[
\kappa^\prime=\lambda_{\max} (A^{-1})\lambda_{\max} (BB^T), \quad \gamma^\prime = \lambda_{\max} (A^{-1}) \lambda_{\max} (BB^T)\lambda_{\min} (CC^T),
\]
\[
\zeta^\prime = \lambda_{\max} (CC^T).
\]
Similar to the argument in the previous case we set
\[
\beta = \frac{\kappa^\prime}{\eta^\prime}\equiv \frac{\lambda_{\max} (A^{-1}) \lambda_{\max} (BB^T)}{\lambda_{\min} (BB^T)+\lambda_{\min} (CC^T)},
\]
and finally conclude that
\[
\alpha \geq \sqrt{\frac{\gamma^\prime \kappa^\prime}{\eta^\prime} -\frac{\theta^\prime \kappa^{\prime^{2}}}{\eta^{\prime^{2}}}+\zeta ^\prime}.
\]
\cred{According to the  above discussion we state the following theorem that gives conditions on $\alpha$ and $\beta$ under which  $p+q>1$ or $p+q\leq 1$.}
\begin{thm}\label{th3}
We have the following cases:

    \noindent
    \textbf{case I:} If
    \[
    \beta=\frac{\kappa}{\eta}\qquad
    \text{and}\qquad
    0<\alpha<\sqrt{\frac{\gamma \kappa}{\eta}-\frac{\theta \kappa ^2}{\eta ^2}+\zeta},
    \]
    then, $p+q>1$.

    \noindent
    \textbf{case II:} If
    \[
    \beta = \frac{\kappa^\prime}{\eta^\prime} \qquad \text{and} \qquad  \alpha \geq \sqrt{\frac{\gamma^\prime \kappa^\prime}{\eta^\prime} -\frac{\theta^\prime \kappa^{\prime^{2}}}{\eta^{\prime^{2}}}+\zeta ^\prime},
    \]
    then, $p+q \leq 1$.
\end{thm}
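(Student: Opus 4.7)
The plan is to turn the discussion immediately preceding the statement into a clean proof by organizing it into two parallel arguments, one for each case, both resting on the $p+q$ estimate \eqref{lam} already derived in the analysis following Theorem \ref{th2}. For each case, I treat the relevant side of \eqref{lam} as a sufficient condition that forces the desired inequality on $p+q$, then reduce it to a quadratic inequality in the parameter $\alpha$ (with $\beta$ to be chosen), and finally make a judicious choice of $\beta$ that kills the linear term in $\alpha$, leaving a bound of the announced form.

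For Case I, the first step is to note that if the lower bound in \eqref{lam} exceeds $1$, then automatically $p+q>1$. Clearing denominators in
\[
1<\frac{\lambda_{\min}(A^{-1})\lambda_{\min}(BB^T)+\lambda_{\min}(C^TC)\bigl(\alpha+\beta\lambda_{\max}(CC^T)\bigr)^{-1}}{\alpha+\beta\lambda_{\max}(BB^T)}
\]
and using the abbreviations $\eta,\theta,\kappa,\gamma,\zeta$ introduced in the excerpt, I rearrange everything to one side and obtain the quadratic inequality $\alpha^{2}+(\eta\beta-\kappa)\alpha+\theta\beta^{2}-\gamma\beta-\zeta<0$ in $\alpha$. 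The second step is to set $\beta=\kappa/\eta$, which is legitimate because $BB^T$ and $CC^T$ are SPD so $\eta>0$; this choice eliminates the coefficient of $\alpha$. The inequality then collapses to $\alpha^{2}<\gamma\kappa/\eta-\theta\kappa^{2}/\eta^{2}+\zeta$, which is precisely the stated bound on $\alpha$, provided the right-hand side is positive.

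For Case II, I repeat the same template with the roles of the bounds reversed: the upper bound in \eqref{lam} being $\le 1$ is sufficient to force $p+q\le 1$. Clearing denominators in the analogous inequality and using the primed constants $\eta',\theta',\kappa',\gamma',\zeta'$ yields $\alpha^{2}+(\eta'\beta-\kappa')\alpha+\theta'\beta^{2}-\gamma'\beta-\zeta'\ge 0$, and the same device $\beta=\kappa'/\eta'$ removes the linear term, leaving $\alpha^{2}\ge \gamma'\kappa'/\eta'-\theta'\kappa'^{2}/\eta'^{2}+\zeta'$, which is the hypothesis of Case II.

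The main technical obstacle, and the one point where a short verification is genuinely needed, is checking that the radicands are positive so the square roots are well defined and the interval for $\alpha$ in Case I is non-empty. For Case I this reduces, after factoring out $\kappa/\eta>0$, to the inequality $\theta\kappa<\eta\gamma$ (already observed in the excerpt), which follows directly upon substituting the definitions of $\theta,\kappa,\eta,\gamma$ and cancelling the common factor $\lambda_{\min}(A^{-1})\lambda_{\min}(BB^T)\lambda_{\max}(CC^T)$; the added $\zeta>0$ only makes the radicand larger. An analogous sign check handles Case II. Beyond this, the argument is purely algebraic rearrangement, so no deeper obstacle is expected.
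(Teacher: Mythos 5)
Your proof is correct and follows essentially the same route as the paper: it uses the two-sided estimate \eqref{lam} for $p+q$ as the sufficient condition, clears denominators to get the quadratic inequality in $\alpha$, chooses $\beta=\kappa/\eta$ (resp.\ $\kappa'/\eta'$) to annihilate the linear term, and verifies positivity of the radicand via $\theta\kappa<\eta\gamma$. The only addition beyond the paper's own discussion is your explicit sign check of the radicands, which is a welcome (and correct) clarification rather than a deviation.
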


\cred{In each iteration of a Krylov subspace method for solving the preconditioned system $\mathcal{M}^{-1} {\mathcal{A}}u=\mathcal{M}^{-1}b$ we need to compute a vector of the form $(z_1;z_2;z_3)=\mathcal{M}^{-1}(r_1;r_2;r_3)$ where $r_1\in\mathbb{R}^{n}$, $r_2\in\mathbb{R}^{m}$ and $r_3\in\mathbb{R}^{\ell}$. To do so, it is enough to solve the system $\mathcal{M}(z_1;z_2;z_3)=(r_1;r_2;r_3)$ for $(z_1;z_2;z_3)$. Since, $\mathcal{M}$ is a block diagonal matrix, solution of the system is reduced to the solution of three systems with the coefficient matrices $A$, $\alpha I+\beta BB^T$ and $\alpha I+\beta CC^T$, which are all SPD.  By summarizing the above notes we can state  Algorithm \ref{alg}.

\begin{algorithm}\label{Alg1}
    \caption{Computation of $(z_1;z_2;z_3)=\mathcal{M}^{-1}(r_1;r_2;r_3)$}\label{alg}
    \begin{enumerate}
        \item Solve $\mathcal{M}_1 z_1 \equiv Az_1=r_1$ for $z_1$.
        \item Solve $\mathcal{M}_2 z_2 \equiv (\alpha I+\beta BB^T)z_2=r_2$ for $z_2$.
        \item Solve $\mathcal{M}_ 3 z_3 \equiv (\alpha I+\beta CC^T)z_3=r_3$ for $z_3$.
    \end{enumerate}
\end{algorithm}

Since the coefficient matrices of the subsystems in Algorithm \ref{alg} are SPD, they can be solved exactly using  either the Cholesky factorization  or inexactly using the conjugate gradient (CG) iteration method. It is noted that, the shift matrix $\alpha I$ in the steps 3 and 4 of Algorithm \ref{alg} increases the convergence speed of the CG method considerably.}

\section{Numerical experiments} \label{sec3}
\cblu{
In this section, we experimentally compare the effectiveness our proposed preconditioner with two preconditioners $\mathcal{P}_{BD2}$ and  $\mathcal{P}_{2}$ defined, respectively, in Eqs. \eqref{BD2} and \eqref{p2} for solving the saddle point linear system \eqref{equ}.
 All the numerical experiments were computed in double precision using some \textsc{Matlab} codes on a Laptop with Intel Core i7 CPU 2.9 GHz, 16GB RAM.

We apply the preconditioners to accelerate the convergence of the GMRES  (flexible and full versions) method \cite{fgmres1,fgmres2}.
In the implementation of the preconditioners $\mathcal{M}$, $\mathcal{P}_{BD2}$ and $\mathcal{P}_{2}$ within the GMRES method three subsystems with SPD coefficient matrices need to be solved. When the subsystems are solved inexactly  we apply the flexible GMRES (FGMRES) method to solve the preconditioned system and if the subsystems are solved exactly we employ the full version of GMRES (Full-GMRES) method. 
 
In the subsequent presented numerical results, the right-hand side vector $b$ is set to $b=\mathcal{A} u^{\ast}$, where $u^{\ast}$ is a vector of all ones. We use a null vector as an initial guess for the GMRES method and the iteration is stopped once the relative residual 2-norm satisfies
\begin{equation}\label{Rk}
    R_k=\frac{\|b-\mathcal{A} u^{(k)}\|_2}{\|b\|_2}<10^{-6},
\end{equation}
where $u^{(k)}$ is the $k$th computed approximate solution. The maximum number of  iterations is set to be  $maxit=1000$. 
To show the accuracy of the methods we report the values
\begin{equation}\label{Ek}
E_k=\frac{\|u^{(k)}-u^*\|_2}{\|u^*\|_2}.
\end{equation}

 In the implementation of the preconditioners $\mathcal{M}$, $\mathcal{P}_{BD2}$ and $\mathcal{P}_{2}$ three subsystems with SPD coefficient matrices need to be solved in each iteration of the GMRES method. These systems are solved using the Cholesky factorization of matrices, when we apply the Full-GMRES method for solving the preconditioned system. On the other hand, the subsystems are solved using the CG method, when we employ the FGMRES method for solving the preconditioned system. In this case, for solving the subsystems the initial guess is set to be a zero vector and the iterations is stopped as soon as the residual norm is reduced by a factor of $10^3$ or the number of iterations exceeds $500$.

 Numerical results are presented in the tables. To show the effectiveness of the preconditioners  we  also report the numerical results of  Full-GMRES  without any preconditioner.}

\begin{ex}\label{ex1} \cite{Ex1-1,Ex1-2}
    Consider the saddle point problem \eqref{equ} for which
    \[
    A=\begin{pmatrix}
        I\otimes T +T\otimes I & 0\\
        0 & I\otimes T +T\otimes I
    \end{pmatrix} \in \mathbb{R}^{2p^2\times 2p^2},
    \]
    \[
    B=\begin{pmatrix}
        I\otimes F & F\otimes I
    \end{pmatrix}\in \mathbb{R}^{p^2\times 2p^2}, \quad C=E\otimes F \in \mathbb{R}^{p^2\times p^2}
    \]
    and
    \[
    T=\frac{1}{h^2} \text{tridiag} (-1,2,-1)\in \mathbb{R}^{p\times p}, \quad F=\frac{1}{h}\text{tridiag}(0,1,-1) \in \mathbb{R}^{p\times p}
    \]
    \[
    E=\text{diag}(1,p+1,\ldots,p^2-p+1),
    \]
    with $\otimes$ being the Kroneker product symbol and $h={1}/{(p+1)}$ the discretization meshsize. Here, the total number of unknowns is $4p^2$ and $m=l$.

Numerical results are presented in Table \ref{tab1} for different values of $p$. This table shows the number of iterations (denoted by ``Iters") and CPU time (denoted by ``CPU") of the FGMRES without preconditioning, and with the preconditioners $\mathcal{M}$, $\mathcal{P}_{2}$ and $\mathcal{P}_{BD2}$. We set $\hat{S}=B{diag(A)}^{-1} B^T$. Moreover, we provide the elapsed CPU time (denoted by ``Prec.CPU") for setting up the preconditioners $\mathcal{P}_{2}$ and $\mathcal{P}_{BD2}$ and the total CPU time (denoted by ``Total.CPU"). Furthermore, for the preconditioner $\mathcal{M}$  we set $\alpha=10^{-3}$ and $\beta=1$. In the all tables, the symbols $\dagger$ and $\ddag$ are used to indicate that the method has not converged in 1000 seconds and $maxit$, respectively. As we see, the preconditioner $\mathcal{M}$ outperforms the other examined preconditioners from the iteration counts, CPU time and the accuracy of computed solution points of view. It should be mentioned that for $p=64,128,256$  the FGMRES method without preconditioning, and the FGMRES with the preconditioners $\mathcal{P}_{2}$ and $\mathcal{P}_{BD2}$ fail to converge in 1000 iterations. 
Therefore, our preconditioner is more effective and practical than the preconditioners $\mathcal{P}_{2}$ and $\mathcal{P}_{BD2}$ 
for solving saddle point problems of the form \eqref{equ}. 

Numerical results of the Full-GMRES method in conjunction with the three preconditioners $\mathcal{M}$, $\mathcal{P}_{2}$ and $\mathcal{P}_{BD2}$ are  shown in Table \ref{tab2}.
We observe that the preconditioner $\mathcal{M}$ has provided quite suitable results and this results are in good agreement with what we claimed above for the preconditioned FGMRES method. In addition, when $p$ is large the preconditioned GMRES method fails to converge for the preconditioners   $\mathcal{P}_{2}$ and $\mathcal{P}_{BD2}$  in 1000 seconds. However, the FGMRES method with the new preconditioner requires less CPU time. It is noted that we could not set up the preconditioners $\mathcal{P}_{2}$ and $\mathcal{P}_{BD2}$  for $p=512$, because of memory limitation.
\end{ex}


\begin{table}
    \small{
    \caption{Numerical results of FGMRES for  Example \ref{ex1}. } \label{tab1}
    \centering
    \begin{tabular}{|c||c|c|c|c|c|}\hline
        $p$ & 16 & 32 & 64 & 128 & 256 \\ \hline
        \rowcolor{lightgray}
        \multicolumn{6}{|c|}{No Preconditioning}\\ \hline
        Iters & 425 & 949 & $\ddag$ & $\ddag$ & $\ddag$ \\
        CPU & 0.24 & 5.02 & $\dagger$ & $\dagger$ & $\dagger$ \\
        $R_k$ & $8.6\times10^{-7}$ & $9.9\times10^{-7}$ & - &  - &  - \\
        $E_k$ & $2.6\times10^{-6}$ & $2.4\times10^{-5}$ & - &  - & - \\ \hline
        \rowcolor{lightgray}
        \multicolumn{6}{|c|}{\textbf{$\mathcal{M}$}}\\
        \hline
        Iters & 109 & 80 & 65 & 71 & 78 \\
        CPU & 0.15 & 0.25 & 1.17 & 6.66 & 36.99 \\
        $R_k$ &  $7.0\times10^{-7}$ &  $8.4\times10^{-7}$ &  $7.1\times10^{-7}$ &  $9.0\times10^{-7}$ &  $1.0\times10^{-6}$ \\
        $E_k$ &  $3.0\times10^{-7}$ &  $6.0\times10^{-7}$ &  $1.5\times10^{-6}$ &  $8.3\times10^{-6}$ &  $1.4\times10^{-5}$ \\ \hline
        \rowcolor{lightgray}
        \multicolumn{6}{|c|}{\textbf{$\mathcal{P}_{2}$}}\\ \hline
        Iters & 124 & 616 &  $\ddag$ & $-$  & $-$  \\
        Prec.CPU & 0.0006 & 0.0015 & 0.0082  &  $-$ & $-$  \\
        CPU & 0.81 & 25.53 &  288.93 &   $\dagger$ & $\dagger$ \\
        Total.CPU & 0.81 & 25.53 & 288.93  &  $-$  & $-$  \\
        $R_k$ &  $9.9\times10^{-7}$ & $8.3\times10^{-7}$ & $1.8\times10^{-1}$ &  $-$  & $-$  \\
        $E_k$ & $1.7\times10^{-6}$ & $1.7\times10^{-6}$ & $6.8\times10^{-1}$  &  $-$ &  $-$ \\ \hline
        \rowcolor{lightgray}
        \multicolumn{6}{|c|}{\textbf{$\mathcal{P}_{BD2}$}}\\ \hline
        Iters & 163 & 785 & $\ddag$  & $-$  & $-$  \\
        Prec.CPU & 0.0006 & 0.0015 & 0.0082   & $-$  &   $-$ \\
        CPU & 0.99 & 34.66 & 286.02  & $\dagger$ &  $\dagger$  \\
        Total.CPU & 0.99 & 34.36 &  286.03 &  $-$ &  $-$  \\
        $R_k$ &  $8.1\times10^{-7}$ & $7.1\times10^{-7}$ & $1.2\times10^{-1}$  &  $-$ &  $-$  \\
        $E_k$ & $1.0\times10^{-6}$ & $1.1\times10^{-6}$ & $6.5\times10^{-1}$ & $-$  &   $-$ \\ \hline
    \end{tabular}}
\end{table}

\small{
\begin{table}
     {    \caption{Numerical results of  Full-GMRES  for  Example \ref{ex1}.}\label{tab2}
    \centering
    \begin{tabular}{|c||c|c|c|c|c|}\hline
        $p$ & 16 & 32 & 64 & 128 &256  \\ \hline
       \rowcolor{lightgray} \multicolumn{6}{|c|}{No Preconditioning}\\ \hline
        Iters & 425 & 949 & $\ddag$ & $\ddag$ & $\ddag$  \\
        CPU & 0.21 & 5.35 & 14.27 & 45.34 & 182.99  \\
        $R_k$ & $8.6\times10^{-7}$ & $9.9\times10^{-7}$ & $2.7\times10^{-3}$  & $6.7\times10^{-3}$ & $4.9\times10^{-2}$ \\
        $E_k$ & $2.6\times10^{-6}$ & $2.4\times10^{-5}$ & $1.8\times 10^{-1}$   &  $5.5\times10^{-1}$ & $7.8\times10^{-1}$ \\ \hline
        \rowcolor{lightgray}\multicolumn{6}{|c|}{\textbf{$\mathcal{M}$}}\\
        \hline
        Iters & 109 & 75 & 54  & 60 & 74  \\
        Prec.CPU & 0.0016 & 0.0037 & 0.0389  & 0.2334  & 1.8452 \\
        CPU &  0.05 &  0.08 &  0.24 & 1.67 &  14.08 \\
        Total.CPU &  0.05 & 0.08 & 0.28  & 1.91 & 15.92 \\
        $R_k$ & $7.3\times10^{-7}$ & $9.9\times10^{-7}$ & $7.8\times10^{-7}$ & $8.8\times10^{-7}$  & $6.0\times10^{-7}$ \\
        $E_k$ & $2.9\times10^{-7}$ & $9.3\times10^{-7}$ & $2.5\times10^{-6}$ & $5.5\times10^{-6}$ & $7.8\times10^{-6}$ \\
        \hline
       \rowcolor{lightgray} \multicolumn{6}{|c|}{\textbf{$\mathcal{P}_{2}$}}\\ \hline
        Iters & 115 & 402 &  $\ddag$ & $\ddag$  & - \\
        Prec.CPU & 0.0286 & 0.2170 & 6.0734  & 261.1566  & - \\
        CPU & 0.05  & 1.77  & 42.95 & 454.22 & $\dagger$  \\
        Total.CPU &  0.08 & 1.98 &  49.02 & 715.38 & - \\
        $R_k$ & $8.4\times10^{-7}$ & $9.5\times10^{-7}$ & $1.6\times10^{-3}$ & $7.0\times10^{-1}$  & -\\
        $E_k$ & $1.3\times10^{-6}$ & $7.9\times10^{-6}$ & $2.2\times10^{-4}$ & $1.7\times10^{0}$ & - \\ \hline
       \rowcolor{lightgray} \multicolumn{6}{|c|}{\textbf{$\mathcal{P}_{BD2}$}}\\ \hline
        Iters & 156 & 569 & $\ddag$  & $\ddag$ & -  \\
        Prec.CPU & 0.0286 & 0.2170 & 6.0734  & 261.1566 & - \\
        CPU &  0.05 & 3.15 & 41.72 & 451.88 & $\dagger$  \\
        Total.CPU & 0.08 & 3.37 & 47.80 & 713.04 & -  \\
        $R_k$ & $8.7\times10^{-7}$ & $8.8\times10^{-7}$ & $1.9\times10^{-3}$ & $4.9\times10^{-1}$ & - \\
        $E_k$ & $1.1\times10^{-6}$ & $1.3\times10^{-5}$ & $5.5\times10^{-4}$ & $4.7\times10^{-1}$ & - \\ \hline
    \end{tabular}}
\end{table}}

\begin{ex}\label{ex2}\cite{Ex1-1}
    Consider the saddle point problem \eqref{Sdpr1}, for which
    \[
    A=\text{diag}(2W^TW+D_1,D_2,D_3)\in \mathbb{R}^{n\times n}
    \]
    is a block-diagonal matrix,
    \[
    B=[E,-I_{2\widetilde{p}},I_{2\widetilde{p}}]\in \mathbb{R}^{m\times n} \quad \text{and}\quad C=E^T \in \mathbb{R}^{\ell \times m}
    \]
    are both full row-rank matrices, where $\widetilde{p}=p^2$, $\widehat{p}=p(p+1)$; $W=(w_{ij}) \in \mathbb{R}^{\widehat{p}\times \widehat{p}}$ with $w_{ij} =e^{-2((i/3)^2+(j/3)^2)}$; $D_1=I_{\widehat{p}}$ is an identity matrix; $D_i=\text{diag}(d_j ^{(i)})\in \mathbb{R}^{2\widetilde{p}\times 2\widetilde{p}}$, $i=2,3,$ are diagonal matrix with
    \begin{equation*}
        d_j ^{(2)}=\begin{cases}
            1, & \text{for} \quad 1\leq j\leq\widetilde{p},\\
            10^{-5}(j-\widetilde{p})^2, & \text{for} \quad \widetilde{p}+1\leq j \leq 2\widetilde{p},
        \end{cases}
    \end{equation*}
    \[
    d_j ^{(3)}=10^{-5}(j+\widetilde{p})^2, \qquad \text{for} \quad 1\leq j\leq 2\widetilde{p},
    \]
    and
    \[
    E=\begin{pmatrix}
        \widehat{E}\otimes I_p \\
        I_p \otimes \widehat{E}
    \end{pmatrix},
    \quad \widehat{E}=\begin{pmatrix}
        2 & -1 & & & \\
        &  2 & 1 & & \\
        & & \ddots & \ddots & \\
        & & & 2 & -1
    \end{pmatrix} \in  \mathbb{R}^{p \times (p+1)}.
    \]

Similar to Example \ref{ex1}, the Matrix $\hat{S}$ is set to be  $\hat{S}=B{diag(A)}^{-1} B^T$. In addition, we choose the parameters $\alpha = 10^{-1}$ and $\beta = 1$.  Numerical results  for different values of $p$ are listed in Table \ref{tab3}. As we observe, the numerical results illustrate that the preconditioner $\mathcal{M}$  considerably reduces the CPU time of the FGMRES method without preconditioning.  

We have also applied the Full-GMRES iteration method incorporated with preconditioners $\mathcal{M}$, $\mathcal{P}_{2}$ and $\mathcal{P}_{BD2}$. In this case, the subsystems were solved exactly using the Cholesky factorization. Our numerical results show that, the 
Full-GMRES method outperforms the Full-GMRES method in conjunction with the preconditioners (all the three preconditioners) from the elapsed CPU time point of view for large values of $p$.  Hence, we have not reported the numerical results.
\end{ex}
\begin{table}
    \tiny{
    \caption{Numerical results of FGMRES for  Example \ref{ex2}.}\label{tab3}
    \centering
    \begin{tabular}{|c||c|c|c|c|c|c|}\hline
        $p$ & 16 & 32 & 64 & 128 & 256 &512 \\ \hline
        \rowcolor{lightgray}
        \multicolumn{7}{|c|}{\textbf{No Preconditioning}}\\ \hline
        Iters & 186 & 190 & 187 & 180 & 172 & 163 \\
        CPU & 0.08 & 0.37 & 0.97 & 2.86 & 17.78 &  96.70\\
        $R_k$ & $1.0\times10^{-6}$ & $9.9\times10^{-7}$ & $1.0\times10^{-6}$ &  $9.8\times10^{-7}$ &  $9.6\times10^{-7}$ & $1.0\times10^{-6}$\\
        $E_k$ & $1.3\times10^{-5}$ & $1.4\times10^{-5}$ & $1.4\times 10^{-5}$ &  $1.4\times10^{-5}$ &  $1.4\times10^{-5}$ &  $1.4\times10^{-5}$\\ \hline
        \rowcolor{lightgray}
        \multicolumn{7}{|c|}{\textbf{$\mathcal{M}$}}\\
        \hline
        Iters & 70 & 69 & 68 & 65 & 63 & 60\\
        CPU & 0.09 & 0.29 & 0.85 & 2.18 & 10.30 & 45.97 \\
        $R_k$ &  $1.0\times10^{-6}$ &  $9.5\times10^{-7}$ &  $8.8\times10^{-7}$ &  $9.3\times10^{-7}$ &  $8.5\times10^{-7}$ &  $9.5\times10^{-7}$\\
        $E_k$ &  $5.6\times10^{-6}$ &  $5.7\times10^{-6}$ &  $5.0\times10^{-6}$ &  $5.5\times10^{-6}$ &  $4.9\times10^{-6}$  &  $5.2\times10^{-6}$\\ \hline
        \rowcolor{lightgray}
        \multicolumn{7}{|c|}{\textbf{$\mathcal{P}_{2}$}}\\ \hline
        Iters & 13 & 13 & 13  &13   & $-$ & $-$ \\
        Prec.CPU & 0.0015 & 0.01 & 0.37  &13.75   & $-$ & $-$ \\
        CPU & 0.02 & 0.18 & 2.59  &34.55   & $\dagger$ &$\dagger$ \\
        Total.CPU & 0.03 & 0.20 & 2.96  &48.31   &$-$ &  $-$\\
        $R_k$ &  $5.1\times10^{-7}$ & $3.3\times10^{-7}$ & $3.4\times10^{-7}$  & $2.5\times10^{-7}$   & $-$ & $-$\\
        $E_k$ & $2.1\times10^{-6}$ & $1.5\times10^{-6}$ &  $1.1\times10^{-6}$ &$8.0\times10^{-7}$   & $-$ & $-$  \\ \hline
        \rowcolor{lightgray}
        \multicolumn{7}{|c|}{\textbf{$\mathcal{P}_{BD2}$}}\\ \hline
        Iters & 19 & 18 & 18  &18   & $-$  & $-$ \\
        Prec.CPU & 0.0015 & 0.01 & 0.37  &13.75  & $-$ & $-$ \\
        CPU & 0.03 & 0.25 & 3.30  & 49.54  & $\dagger$ & $\dagger$ \\
        Total.CPU & 0.03 & 0.26 & 3.67  &63.29   & $-$ & $-$ \\
        $R_k$ &  $1.2\times10^{-7}$ & $9.6\times10^{-7}$ & $8.4\times10^{-7}$  &$9.8\times10^{-7}$   & $-$ & $-$ \\
        $E_k$ & $6.1\times10^{-7}$ & $1.9\times10^{-6}$ &  $1.8\times10^{-6}$ &$3.2\times10^{-6}$   & $-$ & $-$ \\ \hline
    \end{tabular}}
\end{table}


\section{Conclusion} \label{sec4}
A new block diagonal preconditioner has been presented for a class of $3\times3$ block saddle point problems. This preconditioner is based on augmentation and performs well in practice. Also, it is  easy to implement and has much better efficiency than the recently existing preconditioners. We  have further estimated the lower and upper bounds of eigenvalues of the preconditioned matrix. We have examined the new preconditioner to accelerate the convergence speed of the FGMRES method as well as Full-GMRES. Our numerical tests illustrate that the proposed preconditioner is quite suitable and is superior to the other tested preconditioners in the literature.

\section*{Acknowledgments}
The authors would like to thank the anonymous referees for their useful comments and suggestions.


\end{document}